\documentclass[11pt]{article}

\usepackage{authblk}

\usepackage{fullwidth}
\usepackage{color}
\usepackage[color,matrix,arrow]{xy}
\usepackage[top=1.5in, bottom=1.5in, left=1in, right=1in]{geometry}
\usepackage{amsgen}
\usepackage{amsmath}
\usepackage{amstext}
\usepackage{sectsty}
\usepackage{amsbsy}
\usepackage{amsopn}
\usepackage{amsfonts}
\usepackage{amssymb}
\usepackage{eepic}
\usepackage{graphicx}
\usepackage{epsf}
\usepackage{pstricks}
\xyoption{all}
\usepackage{amsthm}

\def\A{{\cal{A}}}

\def\iff{\Leftrightarrow}
\def\Rw{\Rightarrow}
\def\oo{\overline}
\def\wt{\widetilde}
\def\wh{\widehat}

\def\N{\mathbb{N}}

\def\fix{\mbox{Fix}}

\def\pref{\mbox{Pref}\,}

\def\max{\mbox{max}}
\def\min{\mbox{min}}

\def\rk{\mbox{rk}}

\def\id{\mbox{id}}
\def\sup{\mbox{sup}}

\def\Rat{\mbox{Rat}}

\def\st{{\rm St}}

\def\p{\varphi}

\def\inv{^{-1}}

\def\bi{\begin{itemize}}
\def\ei{\end{itemize}}
\def\beq{\begin{equation}}
\def\eeq{\end{equation}}

\def\xr{\xrightarrow}

\newtheorem{T}{Theorem}[section]
\newcommand{\bt}{\begin{T}}
\newcommand{\et}{\end{T}}
\newcommand{\ftd}{$\square$\end{T}}
\newcommand{\Eq}{\text{Eq}}
\newcommand{\Ker}{\text{Ker}}
\newcommand{\Fix}{\text{Fix}}
\newcommand{\End}{\text{End}}
\newcommand{\Aut}{\text{Aut}}
\newcommand{\Mon}{\text{Mon}}

\newtheorem{Proposition}[T]{Proposition}
\newcommand{\bp}{\begin{Proposition}}
\newcommand{\ep}{\end{Proposition}}
\newcommand{\fpd}{$\square$\end{Proposition}}

\newtheorem{Lemma}[T]{Lemma}
\newcommand{\bl}{\begin{Lemma}}
\newcommand{\el}{\end{Lemma}}
\newcommand{\fld}{$\square$\end{Lemma}}

\newtheorem{Corol}[T]{Corollary}
\newcommand{\bc}{\begin{Corol}}
\newcommand{\ec}{\end{Corol}}
\newcommand{\fcd}{$\square$\end{Corol}}

\newtheorem{Result}[T]{Result}
\newcommand{\br}{\begin{Result}}
\newcommand{\er}{\end{Result}}
\newcommand{\frd}{$\square$\end{Result}}

\newtheorem{Example}[T]{Example}
\newcommand{\be}{\begin{Example}}
\newcommand{\ee}{\end{Example}}

\newtheorem{Problem}[T]{Problem}
\newcommand{\bq}{\begin{Problem}}
\newcommand{\eq}{\end{Problem}}

\newtheorem{Remark}[T]{Remark}
\newcommand{\brm}{\begin{Remark}}
\newcommand{\erm}{\end{Remark}}

\newtheorem{conjecture}[T]{Conjecture}

\begin{document}

\title{On fixed points and equalizers of injective endomorphisms of the free group at infinity}

\author[1]{Andr\'e Carvalho}
\affil[1]{Research Center in Mathematics and Applications (CIMA)
\vspace{-0.2cm}

Department of Mathematics, School of Sciences and Technology of the University of \'Evora

	Rua Rom\~ao Ramalho, 59, 7000-671 \'Evora, Portugal
	
	\texttt{andrecruzcarvalho@gmail.com}\\}
	
\author[2]{Pedro V. Silva}
\affil[2]{Centre of Mathematics of the University of Porto, 
	
	Department of Mathematics,
	Faculty of Sciences of the University of Porto
	
	R. Campo Alegre 687, 4169-007 Porto, 
	Portugal
	
	\texttt{pvsilva@fc.up.pt}}

\maketitle

\begin{abstract}
We study equalizers and fixed points of monomorphisms of free groups at infinity. We show that the action of the equalizer of two monomorphisms on the regular points of the equalizer at infinity has finitely many orbits, showing that the equalizer at infinity is, in some sense, finitely generated and generalizing a previous result of Cooper about fixed points. We additionally show that it is decidable whether an automorphism of a free group has a nontrivial fixed point at infinity. The same result is shown for monomorphisms satisfying the condition of being almost length-increasing. We remark that almost length-increasing  monomorphisms are generic among endomorphisms of a free group. We also prove that being almost length-increasing is a decidable condition. We end the paper with several open problems arising from this work.
\end{abstract}

\section{Introduction}
The study of fixed subgroups of endomorphisms of groups started with the (independent) work of Gersten \cite{[Ger87]} and Cooper \cite{[Coo87]},  proving that the subgroup of fixed points $\Fix(\phi)=\{x\in F_n\mid x\phi=x\}$ of any automorphism $\phi$ of a free group $F_n$ is  finitely generated.
Scott conjectured that the rank of such a fixed subgroup could only be at most $n$ and this was successfully proved by Bestvina and Handel in 1992 \cite{[BH92]}. Imrich and Turner extended these results for general endomorphisms of the free group, by reducing it to the automorphism case in \cite{[IT89]}. The problem of computing a basis for $\Fix(\phi)$ was settled by Bogopolski and Maslakova in 2016 in \cite{[BM16]} for automorphisms and by Mutanguha \cite{[Mut22]} for general endomorphisms of a free group. Hence, fixed subgroups of endomorphisms of free groups are now (in some sense) well understood.  We refer the reader to \cite{[Ven02]} for a survey on fixed subgroups in free groups.
Results about fixed subgroups of different classes of groups have since then been obtained by many (see, for example \cite{[Pau89],[RSS13], [MS21]}). 

However, equalizers are not so well understood. In \cite{[Sta83]}, Stallings conjectured that, given two finitely generated free group homomorphisms, $\phi,\psi:F_n\to F_m$, with injective $\phi$, the equalizer $\Eq(\phi,\psi)=\{x\in F_n\mid x\phi=x\psi\}$ would be finitely generated and its rank bounded by $n$ (see also \cite[Problem 6]{[DV96]}, \cite[Conjecture 8.3]{[Ven02]}, \cite[Problem F31]{[BMS02]}). It was proved in \cite{[GT89]} that the equalizer was always finitely generated, but, so far, the conjecture remains open as there is no bound on its rank. For the free group of rank 2, the problem was solved by Logan in \cite{[Log22]}.
It is clear that equalizers generalize fixed subgroups as $\Fix(\phi)=\Eq(\phi, \id)$ and that in case one of the homomorphisms, say $\psi$, is  bijective, the notions coincide because $\Eq(\phi, \psi)=\Fix(\phi\psi\inv)$. 

Post Correspondence Problem (PCP) is a classical problem in computer science and it can be stated as the problem of deciding, given as input two homomorphisms between free monoids, $\phi,\psi:X^*\to Y^*$, whether the equalizer $\Eq(\phi,\psi)=\{x\in X^*\mid x\phi=x\psi\}$ is trivial or not. Its undecidability was proved by Post in 1946 \cite{[Pos46]}.

Naturally, one can think of the analogous question for  groups. Let $A,B$ be alphabets. The Post Correspondence Problem for free groups is the problem of deciding, given homomorphisms $\phi,\psi:F_A\to F_B$, whether $\Eq(\phi,\psi)$ is trivial or not. While there are results for other classes of groups \cite{[MNU14],[CLL24]}, the problem for free groups is an important open question (see \cite[Problem 5.1.4]{[DKLM19]}
 and \cite[Section 1.4]{[MNU14]}). We refer the reader to \cite{[CL21]} for a survey on the PCP and similar questions for free groups.
 
We will focus on the case of nonabelian free groups. Composing with an injective homomorphism $\theta:F_m\to F_n$ on the right, we have that $\Eq(\phi,\psi)$ is nontrivial if and only if $\Eq(\phi\theta,\psi\theta)$ is nontrivial.  So, we will assume that both ranks are the same and talk about endomorphisms instead of homomorphisms.
We know that triviality (and even computability) of the fixed subgroup is decidable \cite{[Mut22]} and it is easy to see that if $\phi$ and $\psi$ are both noninjective, then $\Ker(\phi)\cap \Ker(\psi)$ is always nontrivial 
(if $u \in \Ker(\phi)$ and $v \in \Ker(\psi)$ are nontrivial, take a common power if they commute and their commutator if they do not),
so the PCP is trivially decidable in this case. However, the case where one homomorphism is injective and the other one is not bijective remains unknown.

For monoids, the infinite variation of the PCP asks whether there is a right-infinite word where the two morphisms coincide and it is shown to be undecidable \cite{[DL12]} and ``not more complex than the PCP"  \cite{[Fin15]}. We will study equalizers and fixed points of endomorphisms of the free group at infinity.

Given two words $u$ and $v$ on a free group, we write $u\wedge v$ to denote the longest common prefix of $u$ and $v$. The prefix metric on a free group is defined by
$$d(u,v)=\begin{cases}
2^{-|u\wedge v|} \text{ if $u\neq v$}\\
0 \text{ otherwise}
\end{cases}.$$
The  completion $(\widehat{F}_n,\widehat{d})$ is a compact space which can be described as the set of all finite and infinite reduced words on the alphabet $A \cup A^{-1}$. We will denote by $\partial F_n$ the set consisting of only the infinite words and call it the \emph{boundary} of $F_n$. This coincides with the Gromov boundary defined more generally for hyperbolic groups.

It is well known by a general topology result \cite[Section XIV.6]{[Dug78]} that every uniformly continuous mapping $\varphi$ between metric spaces admits a unique continuous extension $\widehat\varphi$ to the completion. The converse is obviously true in this case by compactness. For free groups, uniformly continuous endomorphisms are precisely the injective ones. So, for this purpose, we will only consider equalizers and fixed points for injective endomorphisms.

When considering the continuous extension $\wh \phi$ of an injective endomorphism $\phi\in \End(F_n)$ to the completion $\wh F_n$, all elements in the topological closure $\Fix(\phi)^c$ of $\Fix(\phi)$ are fixed by $\wh \phi$. But these might not be the only ones. We will say that elements belonging to $\Fix(\phi)^c$ are singular (infinite) fixed points and elements in $\Fix(\wh\phi)\setminus \Fix(\phi)^c$ are regular (infinite) fixed points.
In \cite{[Coo87]}, Cooper proved that the fixed subgroup of a free group automorphism $\phi$ is finitely generated and that, when considering the extension $\widehat\phi$ of $\phi$ to the Gromov boundary, then the natural action of $\Fix(\phi)$ on the regular infinite fixed points of $\wh \phi$ has finitely many orbits, showing that $\Fix(\widehat\phi)$ is, in some sense, finitely generated.

We show that Cooper's techniques for fixed subgroups of automorphisms can be adapted to the setting of equalizers of injective endomorphisms of the free group, by proving that the action of $\Eq(\phi,\psi)$ on the regular points of $\Eq(\wh \phi,\wh\psi)$ has finitely many orbits, showing that, in some sense, the equalizer \emph{at infinity} is also finitely generated. We remark that, in general, equalizers are much harder to understand than fixed subgroups.

\newtheorem*{eqfingen}{Theorem \ref{eq_fin_gen}}
\begin{eqfingen}
Let $\phi,\psi$ be injective endomorphisms of $F_n$. Then 
the action of $\Eq(\phi,\psi)$ on the regular points of $\Eq(\wh\phi,\wh\psi)$ has finitely many orbits.
\end{eqfingen}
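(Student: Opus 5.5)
Following Cooper, I will attach to each $u\in F_n$ a bounded ``discrepancy'' and show that along a regular equalizer point this discrepancy stays bounded; a pigeonhole argument then lets me cut and paste ray segments into equalizer elements. Fix an infinite reduced word $\alpha=a_1a_2\cdots$ with $\alpha\wh\phi=\alpha\wh\psi$, and write $\alpha_k=a_1\cdots a_k$. Set
\[
w_k=(\alpha_k\phi)^{-1}(\alpha_k\psi)\in F_n .
\]
Then $\alpha_k\phi\, w_k=\alpha_k\psi$. Moreover, for $k<l$ we have $w_k=w_l$ exactly when $(\alpha_k^{-1}\alpha_l)\phi=(\alpha_k^{-1}\alpha_l)\psi$, that is, when $\alpha_k^{-1}\alpha_l\in\Eq(\phi,\psi)$.

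\textbf{Step 1 (bounded cancellation).} Since $\phi,\psi$ are injective, I would invoke the bounded cancellation lemma: there is a constant $C$ such that for every reduced product $uv$ the word $u\phi$ is a prefix of $(uv)\phi$ up to deleting at most $C$ letters, and likewise for $\psi$. Both $\alpha\wh\phi$ and $\alpha\wh\psi$ equal a common infinite word $\beta$. Hence $\alpha_k\phi$ and $\alpha_k\psi$ each agree with a prefix of $\beta$ except for a tail of length at most $C$. It follows that $w_k$ is a product $s^{-1}p\,t$, where $s,t$ have length at most $C$ and $p$ is a (possibly inverted) segment of $\beta$. The length of $p$ is $\bigl||\alpha_k\phi|-|\alpha_k\psi|\bigr|$ up to $2C$, so it need not be bounded.

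\textbf{Step 2 (key dichotomy).} I claim that if $|w_k|$ is unbounded along a subsequence, then $\alpha$ is singular or lies in the closure of something controlled. I expect this step to be the main obstacle, and I would argue as follows. If $|w_k|\to\infty$ along some sequence, then the lengths $|\alpha_k\phi|$ and $|\alpha_k\psi|$ differ without bound, while both words are prefixes of the same $\beta$ up to $C$ letters. Conversely, if $|w_k|\le M$ for infinitely many $k$, there are only finitely many possible values of $w_k$, so by pigeonhole $w_k=w_l$ for infinitely many pairs. This gives equalizer elements $\alpha_k^{-1}\alpha_l$. If $w_k=w_{k_0}$ for infinitely many $k$, then $\alpha$ is a limit of $\alpha_{k_0}\,\Eq(\phi,\psi)$, which is a coset rather than the subgroup itself. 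To handle this I would use the paper's notion of regularity, namely $\alpha\notin\Eq(\phi,\psi)^c$, and allow the orbit representatives to account for the finite prefix $\alpha_{k_0}$. Concretely, I would show that a regular $\alpha$ has only finitely many indices $k$ with $|w_k|\le M$ for suitably chosen $M$. In the unbounded case, I would aim to show that eventually $w_k$ has the form $s_k^{-1}\,c_k$, where $c_k$ is a growing segment of $\beta$ and the short part $s_k$ comes from a finite set. This essentially requires proving that one of the two maps is ``ahead'' on $\alpha$ by an amount that grows.

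\textbf{Step 3 (counting orbits).} For regular $\alpha$, choose $k_0$ as the last index with $|w_{k_0}|\le M$, where $M$ is a constant depending only on $C$ (say $M=4C$). The data $(w_{k_0},a_{k_0+1},\text{tail type})$ then ranges over a finite set. I would show that if two regular points $\alpha,\alpha'$ have the same such data, then $\alpha'=(\alpha'_{k_0'}\alpha_{k_0}^{-1})\,\alpha$ with $\alpha'_{k_0'}\alpha_{k_0}^{-1}\in\Eq(\phi,\psi)$. This holds because equal values of $w$ give equalizer elements, and the tails are determined by $\beta$'s continuation together with the bounded-cancellation constraint. Finiteness of the data set then yields finitely many $\Eq(\phi,\psi)$-orbits. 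The delicate point remains Step 2: showing that beyond $k_0$ the tail is uniquely determined, via the forced growth of $|w_k|$ that pins down the letters of $\alpha$ through $\beta$. This is exactly where Cooper's argument for automorphisms must be adapted, with injectivity and bounded cancellation replacing invertibility.
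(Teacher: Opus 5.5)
\textbf{Gap: the orbit-identification step is never proved.} Your count rests on the claim that two regular points carrying the same data lie in the same orbit, and this is not established. ``Tail type'' is never defined. The claim that the tail after $k_0$ is ``determined by $\beta$'s continuation'' is circular, because $\beta=\alpha\wh\phi$ depends on that tail, and two competing points have different images.

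What is missing is a uniform branching estimate; this is exactly Lemma~\ref{prefixo igualizador}. It says there is a constant $M_0$, depending only on the bounded cancellation constants, such that whenever $\alpha_1\neq\alpha_2$ lie in $\Eq(\wh\phi,\wh\psi)$ and $q=\alpha_1\wedge\alpha_2$, the discrepancy $(q\phi)^{-1}(q\psi)$ has length at most $M_0$. The proof goes as follows.
Write $\alpha_i=q\delta_i$, with $\delta_1,\delta_2$ starting with different letters. Bounded cancellation gives $|\delta_1\wh\phi\wedge\delta_2\wh\phi|\le C$, and it shows that at most $C$ letters of $q\phi$ are cancelled in $\alpha_i\wh\phi$. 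Hence $\alpha_1\wh\phi$ and $\alpha_2\wh\phi$ branch within distance $C$ of $|q\phi|$, and each contains $q\phi$ minus at most its last $C$ letters as a prefix. The same holds for $\psi$. Since $\alpha_i\wh\phi=\alpha_i\wh\psi$, the two branch points coincide, so $q\phi$ and $q\psi$ differ by a word of length $O(C)$. Your intuition that a large discrepancy ``pins down'' the next letter is the contrapositive of this. It has to be proved by comparing the images of two diverging rays, not by reading letters off a single $\beta$.

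\textbf{A sign slip creates a spurious difficulty in Step~2.} The equality $w_k=w_l$ is equivalent to $(\alpha_l\alpha_k^{-1})\phi=(\alpha_l\alpha_k^{-1})\psi$, i.e.\ to $\alpha_l\alpha_k^{-1}\in\Eq(\phi,\psi)$, not $\alpha_k^{-1}\alpha_l$. With the correct identity, if $w_k=w_{k_0}$ for infinitely many $k$, then the elements $\alpha_k\alpha_{k_0}^{-1}$ of $E=\Eq(\phi,\psi)$ converge to $\alpha$, so $\alpha$ is singular. Hence for regular $\alpha$ and any $M$, only finitely many $k$ satisfy $|w_k|\le M$. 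Since $w_0=1$, the index $k_0$ exists. The coset worry and the analysis of the unbounded case are unnecessary.

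\textbf{How the argument closes.} With the branching lemma in hand and $M\ge M_0$, your Step~3 works using only the data $(w_{k_0},a_{k_0+1})$. Suppose $\alpha,\alpha'$ share this data.
Then $g=\alpha'_{k_0'}\alpha_{k_0}^{-1}\in E$. The word $g\alpha=\alpha'_{k_0'}a_{k_0+1}a_{k_0+2}\cdots$ is reduced (because $a_{k_0+1}=a'_{k_0'+1}$) and lies in $\Eq(\wh\phi,\wh\psi)$. If $g\alpha\neq\alpha'$, their longest common prefix would be a prefix of $\alpha'$ of length $>k_0'$ whose discrepancy has length $\le M_0\le M$. This contradicts the maximality of $k_0'$.

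The repaired argument is a direct proof bounding the number of orbits by $2n$ times the size of the ball of radius $M$. The paper instead argues by contradiction, using compactness, orbit representatives maximizing the distance to $E$, and the element $\eta=\delta_i\delta_j^{-1}$. Both rest on the same two ingredients: the branching lemma and the fact that equal discrepancies yield an element of $E$ that cuts out a segment.
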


As a corollary, we have that the PCP answers yes on input $(\phi,\psi)$ if and only if $\Eq(\wh\phi,\wh\psi)$ is finite and we pose the infinite version of Stallings conjecture, which asks for a bound on the number of orbits of the action.

We also study decidability of the triviality of fixed points \emph{at infinity}, meaning the decidability of the following algorithmic problem: given an injective endomorphism $\phi\in \End(F_n)$ as input, can we decide if $\Fix(\wh\phi)$ is trivial? One of the motivations is the PCP at infinity. In the finite case, for injective $\phi,\psi\in \Mon(F_n)$, it is not known if triviality of $\Eq(\phi,\psi)$ is decidable, unless one of the monomorphisms is an automorphism, in which case the PCP simply asks for the decidability of triviality of a certain fixed subgroup. We start by proving the result for automorphisms.

\newtheorem*{trivialfix}{Theorem \ref{trivialfix}}
\begin{trivialfix}
It is decidable, for a given $\phi \in {\rm Aut}(F_n)$, whether or nor ${\rm Fix}(\wh{\phi})$ is trivial.
\end{trivialfix}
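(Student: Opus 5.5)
We need to decide whether $\Fix(\wh\phi)$ is trivial, meaning it contains no nontrivial finite fixed point and no infinite fixed point. I would split the problem into a finite part and an infinite part.

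\emph{Finite part.} By \cite{[BM16]} we can compute a basis of $\Fix(\phi)$. If $\Fix(\phi)\neq 1$ the answer is ``not trivial''. From now on we assume $\Fix(\phi)=1$. Then $\Fix(\phi)^c=\{1\}$, so every infinite fixed point is regular, and it remains to decide whether $\wh\phi$ fixes some $X\in\partial F_n$.

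\emph{Reduction to a finite search.} Let $C$ be a bounded cancellation constant for $\phi$, and also for $\phi^{-1}$; both are computable. Suppose $X\in\partial F_n$ is fixed and $u$ is a prefix of $X$. Write $X=uy$. By bounded cancellation, $uX'$ with $X'=y$ gives $uy\phi = (u\phi)(y\phi)$ with at most $C$ letters cancelled, so $u\phi = a z$ where $a$ is a prefix of $X$ and $|z|\le C$.

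Following Cooper's idea, I would attach to $u$ the state $q_u=u^{-1}(u\phi)$, reduced. The key observation is that states cannot repeat along $X$. Indeed, if $u$ and $uv$ are prefixes of $X$ with $v\neq1$ and $q_u=q_{uv}=q$, then $u\phi=uq$ and $(uv)\phi=uvq$. Hence $(uvu^{-1})\phi=uvu^{-1}$ is a nontrivial fixed element, which contradicts $\Fix(\phi)=1$.

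Fix a threshold $K$, chosen large with respect to $C$, and let $N$ exceed the number of reduced words of length at most $K$. By pigeonhole, among the first $N$ prefixes of $X$ some prefix $u$ has $|q_u|>K$. Since $u\phi=az$ with $|z|\le C$, this means $|a|$ and $|u|$ differ by more than $K-C$. There are two cases:
\begin{itemize}
\item $a$ is much longer than $u$: then $u\phi = uvz$ with $|v|>K-2C$ (the attracting case);
\item $a$ is much shorter than $u$: then, applying the same reasoning to $\phi^{-1}$, we obtain $u'\phi^{-1}=u'v'z'$ for a suitable prefix $u'$ of $X$ with $|v'|$ large (the repelling case).
\end{itemize}
I expect this second case to be the delicate point. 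If $a$ is a short prefix of $u$, then $u=a b$ with $|b|$ large. I would use $X\phi^{-1}=X$ and bounded cancellation for $\phi^{-1}$ to show that a prefix of length comparable to $|u|$ of $X$ is mapped by $\phi^{-1}$ well beyond itself, again with only finitely many lengths to check.

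\emph{Converse and algorithm.} Conversely, suppose some $u$ satisfies $u\phi=uvz$ with $v\neq 1$, $|z|\le C$ and $|v|>2C$. Then by induction and bounded cancellation, $u\phi^k$ has prefix $uv(v\phi)\cdots(v\phi^{k-1})$ up to cancellations of size at most $C$. This prefix grows, so the limit $\lim_k u\phi^k$ exists in $\partial F_n$ and is fixed by $\wh\phi$. The same holds for $\phi^{-1}$, and $\Fix(\wh{\phi^{-1}})=\Fix(\wh\phi)$.

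The algorithm is therefore: compute $\Fix(\phi)$; if it is trivial, compute $C$, $K$ and $N$, then check all reduced $u$ with $|u|\le N$ for the prefix-expansion condition under $\phi$ and under $\phi^{-1}$. Answer ``nontrivial'' exactly when some $u$ passes.

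\emph{Main obstacle.} The main difficulty is calibrating $K$ against $C$ so that both directions hold: large states found in the pigeonhole step must genuinely produce the expansion condition, for $\phi$ or for $\phi^{-1}$, and the expansion condition must really force a fixed limit rather than being undone by cancellation. If this calibration fails, I would fall back on the Gaboriau--Jaeger--Levitt--Lustig description, namely that when $\Fix(\phi)=1$ every infinite fixed point is attracting or repelling. Attracting fixed points admit a checkable local witness of exactly the above form.
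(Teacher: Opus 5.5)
There is a genuine gap, and it lies in your converse: nothing you wrote shows that a single word $u$ with $u\phi=uvz$, $|v|>2C$, $|z|\le C$ yields a fixed point. Your justification, that $u\phi^k$ has prefix $uv(v\phi)\cdots(v\phi^{k-1})$ ``up to cancellations of size at most $C$'', is not what bounded cancellation gives. With $e=u^{-1}(u\phi)$ one has $u\phi^{k+1}=(u\phi^{k})(e\phi^{k})$. The cancellation in this product is governed by the constant of $\phi^{k}$, not by $C$, and nothing forces $|e\phi^{j}|$ to stay large.

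Equivalently, extend letter by letter ($x\mapsto xa$, with $a$ the first letter of $x^{-1}(x\phi)$). Bounded cancellation only says the excess $|x^{-1}(x\phi)|$ drops by at most about $2C$ per letter. So an excess of $2C+1$ at $u$ controls one further step and then nothing. What you actually need is that \emph{every} extension of the witness still has large excess. Your pigeonhole step cannot supply this: it uses distinctness of states to find \emph{some} prefix with a long state, at an unknown position, but gives no effective length beyond which all states are long. The GJLL fallback has the same defect. Knowing that fixed points are attracting or repelling does not tell you that a finite local pattern certifies one.

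The paper closes exactly this hole by making your injectivity observation effective. Since $\Fix(\phi)=1$, each $q$ equals $x^{-1}(x\phi)$ for at most one $x$. Theorem~\ref{eqphi} computes that $x$, via the fixed subgroup of the extension $\Phi$ of $\phi$ to $F_{n+1}$ with $\$\Phi=q^{-1}\$$. Put $N=\max\{|y\phi^{-1}| : |y|\le C\}$. One then computes the finite set of all words whose excess under $\phi$ or $\phi^{-1}$ has length at most $2C+N$, together with its maximal length $m$. Every word longer than $m$ has excess $>2C+N$. Hence if $|v|>m$ and $v$ is a prefix of $v\phi$, the greedy extension never stalls and produces a fixed point. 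The test becomes: is some $v$ with $m<|v|\le s$ a prefix of $v\phi$ or of $v\phi^{-1}$?

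In the forward direction the witness must then land in this window. Your repelling case works as follows. If $u=ab$ and $u\phi=az$ with $|z|\le C$, then $a\phi^{-1}=u\,(z\phi^{-1})^{-1}$. So it is the \emph{shorter} prefix $a$, not one of length comparable to $|u|$, that is expanded by $\phi^{-1}$, with excess at least $|b|-N$. Taking $|u|=s>K=\max\{|y\phi^{-1}| : |y|\le m+C\}$ forces $|u\phi|>m+C$, hence $|a|>m$. So the calibration you were after is against $N$ and, crucially, against the computed $m$, not against $C$ alone.
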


The (asymptotic) density of a subset $S\subseteq F_n$ of a free group with basis $A=\{a_1,\ldots, a_n\}$ is defined as $$D(S)=\lim_{n\to \infty}\frac{|S\cap B_A(n)|}{|B_A(n)|},$$ where $B_A(n)$ denotes the ball of radius $n$ centered at $1$ in the Cayley graph of $F_n$ with respect to $A$. Similarly, the density of a set $S\subseteq F_n^k$ of $k$-tuples of elements of $F_n$ can be defined as 
$$D(S)=\lim_{n\to \infty}\frac{|S\cap \left(B_A(n)\right)^k|}{|\left(B_A(n)\right)^k|}$$

Combinatorially, we can view an endomorphism of $F_n$ as the $n$-tuple $(a_1\phi,\ldots, a_n\phi)$ of elements of $F_n$. Hence, we can define the density of a subset $S\subseteq \End(F_n)$ of endomorphisms as the density of the set of $n$-tuples defined by $S$.
A set is said to be generic if its asymptotic density is $1$. 

We will then turn our attention to almost length-increasing monomorphisms, that is, monomorphisms  $\p$ for which  $|u\p| < |u|$ for only finitely many $u \in F_n$.  Wagner proved in \cite{[Wag99]} that endomorphisms with remnant are  generic in free groups (see also \cite{[Sta10]} for a strengthening). An endomorphism $\phi$ of $F_n$ is said to have remnant with respect to a fixed basis $\{a_1,\ldots, a_n\}$ of $F_n$ if, for each $i\in\{1,\ldots, n\}$, there is a nontrivial factor of $a_i\phi$ that does not cancel in any product of the form 

$$(a_j\phi)^{\alpha_j}(a_i\phi)(a_k\phi)^{\beta_k},$$
where $j,k\in \{1,\ldots, n\}$, $\alpha_l,\beta_l\in\{-1,0,1\}$, for $l\neq i$ and $\alpha_i,\beta_i\in\{0,1\}$.

It is easy to see that endomorphisms with remnant are, in particular, length-increasing, and that almost length-increasing endomorphisms must be injective (if not, there are infinitely many elements in the kernel whose length reduces when applying the endomorphism), thus showing that length-increasing (and so, almost length-increasing) monomorphisms are generic among endomorphisms of the free group.

Under the assumption that our endomorphism is almost length-increasing, we not only show that triviality of $\Fix(\wh\p)$ is decidable, but $\Fix(\wh\p)$ can, in some sense,  be computed.

\newtheorem*{alifix}{Theorem \ref{alifix}}
\begin{alifix}
Let $\p$ be an almost length-increasing monomorphism of $F_n$ with ${\rm Fix}(\p) = \{1\}$. Then ${\rm Fix}(\wh{\p})$ is computable.
\end{alifix}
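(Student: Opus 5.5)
The plan is to describe every infinite fixed point of $\wh\p$ by a finite amount of data, and then to decide for each candidate datum whether it really gives a fixed point. Since $\Fix(\p)=\{1\}$, the closure $\Fix(\p)^c$ contains no infinite words, so every infinite fixed point is regular. By Theorem~\ref{eq_fin_gen} applied to $(\p,\id)$, the trivial group acts on the regular points with finitely many orbits. Hence $\Fix(\wh\p)$ is a finite set of infinite words, and "computable" will mean that we can list finitely many finite data, each determining one fixed point effectively.

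\textbf{Step 1 (constants).} I would compute a bounded cancellation constant $C$ for $\p$, and a length $N$ such that $|u\p|\ge |u|$ whenever $|u|>N$. The constant $N$ should come out of the decision procedure for almost length-increasing that the paper proves. Now let $X\in\partial F_n$ with $X\wh\p=X$, and write $X=uY$ with $u$ a finite prefix. Then $X=(u\p)(Y\wh\p)$ with at most $C$ letters cancelling. So $u\p=p_us_u$, where $p_u$ is a prefix of $X$ and $|s_u|\le C$. Define the \emph{gain} $g(u)=|p_u|-|u|$; for $|u|>N$ we have $g(u)\ge -C$.

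\textbf{Step 2 (unbounded gain via Fix$(\p)=1$).} This is the step I expect to be the main obstacle. The claim is that along $X$ the gain is unbounded; in fact I want a computable $K$ such that some prefix $u$ with $|u|\le K$ satisfies $g(u)>2C$. Suppose instead that $g(u)$ stays in $[-C,2C]$ for all long prefixes. Then the "state" of $u$, namely the pair consisting of $s_u$ and the signed difference between $p_u$ and $u$, ranges over a finite set of size bounded in terms of $C$ and $n$. Take two prefixes $u\subset u'=uv$ with the same state. Comparing $u'\p=(u\p)(v\p)$ with $p_{u'}s_{u'}$ should give $v\p=s_u\inv v s_u$, up to the bounded overlap. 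That is, $v$ is fixed by $\p$ composed with a conjugation by a short word. Iterating the pigeonhole along $X$ produces many such $v$. I would then argue, using the almost length-increasing property (no long words shrink) and injectivity, that these conjugations must eventually coincide on nested prefixes. That forces a nontrivial genuine fixed word, contradicting $\Fix(\p)=\{1\}$. This part needs care, and I would first try to make it rigorous by a double pigeonhole on states together with the bounded-cancellation overlap.

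\textbf{Step 3 (reconstruction and verification).} Once a prefix $u$ with $|u|\le K$ and $g(u)>2C$ is found, we have that $u$ is a proper prefix of the reliable part $p_u$ of $u\p$. By bounded cancellation, the reliable prefixes of $u\p^k$ then grow strictly. So $X=\lim_k u\p^k$, read off by keeping the first $|u\p^k|-C$ letters at each stage. This shows $X$ is an attracting fixed point determined by $u$.

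The algorithm is then as follows. Enumerate all reduced words $u$ with $|u|\le K$, and keep those for which $u\p$ with its last $C$ letters removed properly extends $u$; for each kept $u$, record the limit word $X_u$. Every fixed point arises this way, by Step 2. Conversely, I would check that each $X_u$ is fixed: $u$ is a prefix of $u\p$ modulo the $C$-tail, so the nested sequence of reliable prefixes is preserved by $\wh\p$, and continuity of $\wh\p$ gives $X_u\wh\p=X_u$. Finally, I would discard duplicates by comparing the data $(u,u\p)$, which determine the limit. This yields the finite list of elements of $\Fix(\wh\p)$, each given effectively.
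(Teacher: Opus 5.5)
Your Step 0 is fine: applying Theorem~\ref{eq_fin_gen} to the pair $(\p,\id)$ is a valid substitute for the paper's appeal to \cite[Theorem 8.5]{[Sil13]}. The problems are in Steps 2 and 3.

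Step 2 is not actually proved, and the route you sketch (an iterated pigeonhole, conjugations ``coinciding on nested prefixes'') is unnecessary, because a single pigeonhole already gives the contradiction. Your state determines a word $z$ of length at most $3C$ with $u\p=uz$; namely $z=u^{-1}(u\p)$, not $s_u$. If also $u'\p=u'z$, then $(u'u^{-1})\p=u'u^{-1}$, so $u=u'$ because $\Fix(\p)=\{1\}$. Indeed, your relation $v\p=z^{-1}vz$ for $v=u^{-1}u'$ says precisely that $uvu^{-1}$ is fixed. This is Lemma~\ref{tau}. Combined with Theorem~\ref{eqphi}, it gives a stronger, uniform statement: the set of \emph{all} reduced words $u$ with $|u^{-1}(u\p)|\le 3C$ is finite and computable, not just those that are prefixes of a given fixed point. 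This finiteness is the heart of the paper's proof (the inclusion $X_2\subseteq L\cup V'$).

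Step 3 needs this uniform statement; your existential version of Step 2 is not enough. Bounded cancellation alone does not make reliable prefixes grow, for two reasons.
\begin{itemize}
\item Appending the next letter $a$ to $v$ changes the excess $|v\p|-|v|$ by $|a\p|-1-2c$ with $0\le c\le C$. So after one prefix of large gain, the gain can fall to $\le C$, and then the next letter is no longer forced.
\item The unreliable $C$-letter tail of $u\p^k$ is mapped onto a tail of $u\p^{k+1}$ of length up to $C$ times the longest letter image. Hence the first $|u\p^k|-C$ letters are not certified.
\end{itemize}
Here is a concrete case. Take $a\p=a^{-1}b^{-1}a^{-1}$, $b\p=ba^2$. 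This is a strictly length-increasing monomorphism with $\Fix(\p)=\{1\}$ and bounded cancellation constant $1$. The word $u=b$ passes your test. But $b\p^2=bab^{-1}a^{-2}b^{-1}a^{-1}$ yields the ``reliable'' prefix $bab^{-1}a^{-2}b^{-1}$, whereas $b\p^3$ and the fixed point starting with $b$ both begin with $bab^{-1}a^{-3}$.

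For the same reason, nothing in your argument shows that a word passing your test leads to a fixed point at all. Also, comparing the data $(u,u\p)$ does not detect that $X_u=X_{u'}$: in this example $u=b$ and $u=ba$ give the same point.

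The repair is the paper's construction. Choose $m$ larger than $N$ and than every word in the finite set above. Take as candidates the words $y$ of length exactly $m$ that are prefixes of $y\p$. Each greedy one-letter extension is again such a word of length $\ge m$, so its remainder is longer than $C$ and the next letter is forced forever. Thus each candidate determines exactly one fixed point, distinct candidates give distinct fixed points, and the length-$m$ prefix of every fixed point is a candidate.
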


We also show that, given an endomorphism of $F_n$, it is decidable whether it is an almost length-increasing monomorphism or not. We end the paper with some future directions and possible attacks on the case of general monomorphisms.

\section{Equalizers at infinity}
Let $\phi,\psi\in \Mon(F_n)$ be two injective endomorphisms of a nonabelian free group and $\wh\phi,\wh\psi$ be the corresponding continuous extensions to the completion $\wh{F_n}$. It is easy to see that $\Eq(\phi,\psi)$ acts on $\Eq(\wh\phi,\wh\psi)$ by multiplication on the left. 
 Elements that belong to the topological closure of $\Eq(\phi,\psi)$ are called \emph{singular}, and those belonging to $\Eq(\wh\phi,\wh\psi)\setminus (\Eq(\phi,\psi))^c$ are called \emph{regular}.
Also, it is easy to see that the orbit of a singular point contains only singular points (and so, the orbit of a regular point contains only regular points).
Notice that the action of $\Eq(\phi,\psi)$ on the singular elements of $\Eq(\wh\phi,\wh\psi)$ can have infinitely many orbits: for example, if $\phi=\psi=id$, then $\Eq(\wh\phi,\wh\psi)$ is uncountable.

The main goal of this section is to show that   the action of $\Eq(\phi,\psi)$ on the regular points of $\Eq(\wh \phi,\wh\psi)$ has finitely many orbits, showing that, in some sense, the equalizer \emph{at infinity} is finitely generated. Notice that $\Eq(\phi,\psi)$ is known to be finitely generated.

As it happened in \cite{[Coo87]}, where the analogous result for fixed points was proved, our main tool will be the \emph{bounded reduction property}.
An endomorphism of the free group has the bounded reduction property (BRP) if there is a constant $M>0$ such that, whenever $u,v\in F_n$ are such that the product $uv$ is reduced, then $|(uv)\phi| \geq |u\phi| + |v\phi| -2M$. In this case, $M$ is said to be a bounded reduction constant (or a BRP constant) for $\phi$. It is well known that an endomorphism of the free group has the BRP if and only if it is injective (see, for example, \cite{[Sil13]}). It is easy to see that the extension of an injective endomorphism of the free group  to the infinity also has the BRP in the following sense: there is a constant $M$ such that, for all $u\in F_n$ and $\alpha\in \partial F_n$, if the product $u\alpha$ is reduced, then there are at most $M$ cancellations occurring in $(u\phi)(\alpha\phi)$.

 Given $\alpha \in \partial F_n$ and $k \geq 0$, we denote by $\alpha^{[k]}$ the prefix of length $k$ of $\alpha$.

We start with two technical lemmas.

\bl \label{brp infinita}
Let $\varphi$ be an injective endomorphism of $F_n$. Then
$$\forall N\in \N \; \exists M\in \N\; \forall \alpha_1,\alpha_2\in \wh F_n\;\; |\alpha_1\wedge \alpha_2|\leq N \implies |\alpha_1\wh\varphi\wedge \alpha_2\wh\varphi| \leq M.$$
\el
\proof Let $B_\varphi$ be a bounded reduction constant for $\varphi$.
It follows from the bounded reduction property (see also \cite[Chapter 5, Proposition 15]{[GH90]}) that 
$$\forall N\in \N \; \exists M_N\in \N\; \forall u,v\in F_n\;\; |u\wedge v|\leq N \implies |u\varphi\wedge v\varphi| \leq M_N.$$
Let $N\in \N$ and $\alpha_1,\alpha_2\in \wh F_n$ be such that $|\alpha_1\wedge \alpha_2|\leq N$. If $\alpha_1,\alpha_2\in F_n$, we are done. So assume that at least one of the two, say $\alpha_1$, belongs to $\partial F_n$.
{Note that $(\partial F_n)\widehat{\varphi} \subseteq 
\partial F_n$ since $\varphi$ is injective.}
Take
$k\in \N$ such that $|\alpha_1^{[k]}\varphi|>|\alpha_1\wh\varphi\wedge\alpha_2\wh\varphi|+B_\varphi$.
Then, by the bounded reduction property, $\alpha_1\wh\varphi\wedge\alpha_2\wh\varphi$ is a prefix of $\alpha_1\wh\varphi\wedge\alpha_1^{[k]}\varphi$.

If $\alpha_2\in F_n$, then 
 $$|\alpha_1\wh\varphi\wedge \alpha_2\varphi|=|\alpha_1^{[k]}\varphi\wedge \alpha_2\varphi|\leq M_N,$$
since $|\alpha_1^{[k]}\wedge \alpha_2|\leq |\alpha_1\wedge\alpha_2|\leq N.$

If, on the other hand, $\alpha_2\in \partial F_n$, then we can choose $k'\in \N$ such that $|\alpha_2^{[k']}\varphi|>|\alpha_1\wh\varphi\wedge\alpha_2\wh\varphi|+B_\varphi$. and put $s=\max\{k,k'\}$. In this case,  $\alpha_1\wh\varphi\wedge\alpha_2\wh\varphi$ is a prefix of $\alpha_1\wh\varphi\wedge\alpha_1^{[s]}\varphi$ and of  $\alpha_2\wh\varphi\wedge\alpha_2^{[s]}\varphi$.
Thus,
 $$|\alpha_1\wh\varphi\wedge \alpha_2\varphi|=|\alpha_1^{[s]}\varphi\wedge \alpha_2^{[s]}\varphi|\leq M_N,$$
since $|\alpha_1^{[s]}\wedge \alpha_2^{[s]}|\leq |\alpha_1\wedge\alpha_2|\leq N.$
\qed\\

For each $N\in \N$, we will denote the constant $M$ coming from the previous lemma by {$M_N^\varphi$.}

\bl \label{prefixo igualizador}
Let $\phi$ and $\psi$ be injective endomorphisms of $F_n$. Then there is some constant $M$ such that, for all {distinct} $\alpha_1,\alpha_2\in \Eq(\wh\phi,\wh\psi)$, letting $\alpha=\alpha_1\wedge \alpha_2$, we have that $|(\alpha\inv\phi)(\alpha\psi)|\leq M$.
\el
\proof  
{Let $B$ be a bounded reduction constant for both $\phi$ and $\psi$.}
Let $\beta=\alpha_1\wh\phi\wedge\alpha_2\wh\phi  \; =\alpha_1\wh\psi\wedge\alpha_2\wh\psi$ and put $k=|\alpha\phi|-|\beta|$. We will start by showing that $|k|\leq \max\{M_0^\phi, B\}$.
For $i=1,2$, write $\alpha_i=\alpha\delta_i$ as a reduced word.
First, notice that
\begin{align*}
|\beta|=|\alpha_1\wh\phi\wedge\alpha_2\wh\phi|=|(\alpha\phi)(\delta_1\wh\phi)\wedge (\alpha\phi)(\delta_2\wh\phi)|\leq |\alpha\phi| + |\delta_1\wh\phi\wedge\delta_2\wh\phi|=
k + |\beta| + |\delta_1\wh\phi\wedge\delta_2\wh\phi|,
\end{align*} 
so $ |\delta_1\wh\phi\wedge\delta_2\wh\phi|\geq -k.$ 
 Since $\phi$ is injective, then the sequence $(M_n^\phi)_n$ is unbounded, so let
$N$ be the smallest natural number such that  $M_N^\phi>-k$. Then, we have that $|\delta_1\wedge\delta_2|\geq N$, since, otherwise, we would have $ |\delta_1\wh\phi\wedge\delta_2\wh\phi|\leq 
{M_{N-1}^\phi} 
\leq -k.$ But $\delta_1\wedge \delta_2=0$, by the maximality of $\alpha$. So, we deduce that $N\leq 0$, that is, that $N=0$, and $-k\leq M_0^\phi$.

Also, $$|\beta|\geq \min\{|\alpha\phi\wedge \alpha_1\wh\phi|,|\alpha\phi\wedge \alpha_2\wh\phi|\}\geq |\alpha\phi|-B =k+|\beta|-B,$$
so $k\leq B$. Hence $-M_0^{\phi} \leq |\alpha\phi|-|\beta| \leq B$ and also $-M_0^{\psi} \leq |\alpha\psi|-|\beta| \leq B$. Putting
$K=\max\{M_0^\phi, M_0^\psi, B\}$, we get that 
$$\big{|}
|\alpha\phi|-|\beta|
\big{|},
\big{|}
|\alpha\psi|-|\beta|
\big{|}
\leq K.$$
Since $\alpha_1 = \alpha\delta_1$ is a reduced product, it follows that $|\alpha\phi|-|\alpha\phi\wedge\alpha_1\wh\phi|\leq B$. Thus
$\big{|}
|\alpha\phi\wedge\alpha_1\wh\phi|-|\beta|
\big{|}
\leq K+B$. Since $\alpha\phi\wedge\alpha_1\wh\phi$ and $\beta$ are both prefixes of $\alpha_1\wh\phi$, then $\alpha\phi = \beta u$ for some $u \in F_n$ of length $\leq K+2B$. Similarly, $\alpha\psi = \beta v$ for some $v \in F_n$ of length $\leq K+2B$. Therefore $|(\alpha\inv\phi)(\alpha\psi)| = |u\inv v| \leq 2K+4B$ and so taking $M=2K+4B$ suffices.
\qed\\

We can now prove the main result of this section.

\bt \label{eq_fin_gen}
 Let $\phi,\psi$ be injective endomorphisms of $F_n$. Then 
the action of $\Eq(\phi,\psi)$ on the regular points of $\Eq(\wh\phi,\wh\psi)$ has finitely many orbits.
\et

\proof
 Write $E = \Eq(\phi,\psi)$. 
Suppose that the action of $E$ on the regular points of $\Eq(\wh\phi,\wh\psi)$ has infinitely many orbits, and let $\alpha_1,\alpha_2,\ldots$ be an infinite list of regular points of
 $\Eq(\wh\phi,\wh\psi)$ belonging to distinct orbits. Define a new infinite set of words by taking $\beta_i$ to be an element of $E\alpha_i$ maximizing $d(\beta_i,E)$, where $d$ is the prefix metric. 
 The $\beta_i$ are well defined since Im$(d) = \{ 2^{-m} \mid m \in \mathbb{N} \} \cup \{ 0\}$.
 
Since $\wh F_n$ is compact, 
 $(\beta_k)_k$ admits a convergent subsequence. By refining $(\beta_k)_k$, we can then assume that 
$\beta_{k}\to \gamma,$
  for some $\gamma\in \wh F_n$. Since $\wh\phi$ and $\wh\psi$ are both uniformly continuous, then $\wh\phi(\beta_{k})\to \wh\phi(\gamma)$ and $\wh\psi(\beta_{k})\to \wh\psi(\gamma)$. But, since $\beta_{k}\in \Eq(\wh\phi,\wh\psi)$, for all $k\in \N$, we have that $\wh\psi(\beta_{k})=\wh\phi(\beta_{k})\to \wh\phi(\gamma)$, and so $\gamma\in\Eq(\wh\phi,\wh\psi)$.
	 Note that $\gamma \in \partial F_n$ since every finite word is an isolated point.

Also, we may assume, refining further the sequence if necessary, that 
$|\beta_{k+1}\wedge\gamma| > 3|\beta_{k}\wedge\gamma|$ for every $k$.  Hence $\gamma$ is distinct from every $\beta_k$. Put $\delta_k=\beta_{k}\wedge \gamma$. Notice that, for  $i < j$, we have that $\delta_i$ is a proper prefix of $\delta_j$ and $|\delta_j| > 3|\delta_i|$.
Now, by Lemma \ref{prefixo igualizador}, there is a universal constant $M$ such that, for all $k\in \N$, there is some $\varepsilon_k\in F_n$ such that $|\varepsilon_k|\leq M$ and  $\delta_k\phi=(\delta_k\psi)\varepsilon_k$  (we apply the lemma to $\beta_k$ and $\gamma$). Hence, we can fix some $i,j\in \N$ such that $i < j$, $\varepsilon_i=\varepsilon_j$ 
 and $\delta_i,\delta_j$ end by the same letter in reduced form.

Put $\eta=\delta_i\delta_j\inv$. Notice that $$\eta\phi=(\delta_i\delta_j\inv)\phi
=\left((\delta_i\phi)\varepsilon_i\inv\right)\left(\varepsilon_i(\delta_j\inv\phi)\right)
=(\delta_i\psi)(\delta_j\inv\psi)=\eta\psi,$$
that is, $\eta\in E$.

It suffices to prove that $d(\beta_j,E) < d(\eta\beta_j,E)$, contradicting the choice of $\beta_j$ as an element of $E\alpha_i$ maximizing $d(\beta_j,E)$. It follows from the definition of $d$ that this is equivalent to having
\beq
\label{froholdt1}
\sup\{|\eta\beta_{j}\wedge u|\; : u \in E) \} < \sup\{|\beta_{j}\wedge u|\; : u \in E) \}.
\eeq

Write $\beta_j = \delta_j\beta'$. Then $\eta\beta_j = \delta_i\beta'$. Since $\beta_j = \delta_j\beta'$ is a reduced product and $\delta_i,\delta_j$ end by the same letter, then $\delta_i\beta'$ is also reduced. Since $\eta\beta_j$ is regular, it is not arbitrarily close to points in $E$. Hence we may define
$$m = \max\{|\delta_i\beta' \wedge u|\; : u \in E \} = \sup\{|\eta\beta_j \wedge u|\; : u \in E \}.$$
Fix $v \in E$ such that $|\delta_i\beta'\wedge v| = m$. It suffices to show that
\beq
\label{froholdt2}
|\beta_j \wedge w| > m \mbox{ for some }w \in E.
\eeq

Suppose first that $m > |\delta_i|$. Then we have a reduced product $v = \delta_i v'$ for some $v' \in F_n$ and $\delta_jv'$ is also reduced. Since $|\delta_i| < |\delta_j|$, we get 
$$|\beta_j \wedge \delta_jv'| = |\delta_j\beta' \wedge \delta_jv'| = |\delta_i\beta' \wedge \delta_iv'| + |\delta_j|-|\delta_i| = m + |\delta_j|-|\delta_i| > m.$$
Since $\delta_jv' = \eta\inv \delta_iv' = \eta\inv v \in E$, (\ref{froholdt2}) holds in this case.

Thus we may assume that $m \leq |\delta_i|$. Let $x$ be the prefix of length $m+1$ of $\delta_j$. Write $\eta = tct\inv$ as a reduced product with $c$ cyclically reduced. Since $|\delta_i| < |\delta_j|$, we have $c \neq 1$. Let $r \geq 1$ be such that $|c^{-r}t\inv| \geq |v|$. Then
$$\oo{\eta^{-r-1}v} = \oo{tc^{-r-1}t\inv v} = tc\inv \oo{c^{-r}t\inv v}$$
and so $tc\inv$ is a prefix of $\eta^{-r-1}v \in E$. Now 
$$|tc\inv| > \frac{|\eta|}{2} = \frac{|\delta_j\delta_i\inv|}{2} 
 > |\delta_i| \geq m$$
 since $|\delta_j| > 3|\delta_i|$. The latter also implies that $|\delta_j \wedge \eta\inv| > |\delta_i| \geq m$, hence $x$ is a prefix of $tc\inv$. Thus $x$ is a prefix of both $\beta_j$ and $\eta^{-r-1}v$. 

Hence (\ref{froholdt2}) holds for $w = \eta^{-r-1}v$. Therefore (\ref{froholdt1}) holds and so 
the action of $E$ on the regular points of $\Eq(\wh\phi,\wh\psi)$ has finitely many orbits.
\qed

\bc
Let $\phi,\psi$ be injective endomorphisms of $F_n$. Then the PCP answers \texttt{YES} on input $\phi,\psi$, i.e., $\Eq(\phi,\psi)$ is trivial  if and only if $\Eq(\wh\phi,\wh\psi)$ is finite.
\ec

Scott conjecture asked whether $\rk(\Fix(\phi))\leq n$ if $\phi\in \Aut(F_n)$ and was finally settled in 1992 in \cite{[BH92]}.
In \cite{[Coo87]}, Cooper posed the infinite version of this question: is there a bound on the number of $\Fix(\phi)$-orbits of $\Fix(\wh\phi)$? Equivalentely, is there a bound on the number of infinite fixed points needed to, together with $\Fix(\phi)$, generate $\Fix(\wh\phi)$?

Similarly, the equalizer conjecture asks if there is a bound on the number of generators on the equalizer depending on the rank of the group. The case of  the free group of rank 2 was solved in \cite{[Log22]}, but the general case is still open. We pose here the infinite version of this conjecture.

\begin{conjecture}\label{infinite stallings}
Let $F_n$ be a finitely generated free group. There is an upper bound, depending only on $n$, for the number of $\Eq(\phi,\psi)$-orbits of the regular points of $\Eq(\wh\phi,\wh\psi)$.
\end{conjecture} 

\section{Fixed points at infinity}

In this section, we will tackle the question of deciding whether the continuous extension of an injective endomorphism of the free group has nontrivial fixed points or not. We will start by assuming that our endomorphism is bijective.

\subsection{Automorphisms}
\label{sec: prelim}
We will now present the basic definitions on rational subsets of groups. For more detail, the reader is referred to \cite{[Ber79]} and \cite{[BS21]}. Let A be a finite set, $\wt{A} = A \cup A\inv$, $G = \langle A\rangle$ be a (finitely generated) group, and $\pi:\tilde A^*\to G$ be the canonical (surjective) homomorphism. 

We say that $L\subseteq \tilde A^*$ is a rational language if $L$ can be obtained from finite subsets of $\tilde A^*$ using finitely many times the operators union, product and star, where $X^*=\bigcup_{n\geq 0}X^n.$ Equivalently, rational languages are those recognized by finite automata.
A subset $K\subseteq G$ is said to be \emph{rational} if there is some rational language $L\subseteq \tilde A^*$ such that $L\pi=K$.
We will denote by $\Rat(G)$ the class of rational  subsets of $G$. Rational subsets generalize the notion of finitely generated subgroups:

\bt[\cite{[Ber79]}, Theorem III.2.7]
\label{AnisimovSeifert}
Let $H$ be a subgroup of a group $G$. Then $H\in \Rat(G)$ if and only if $H$ is finitely generated.
\et

In case the group $G$ is a free group (which will be our case) with basis $A$ with surjective homomorphism $\pi:\tilde A^*\to G$, Benois' Theorem provides us with a useful characterization of rational subsets in terms of reduced words representing the elements in the subset.
\bt[\cite{[Ben79]}]
Let $F$ be a finitely generated free group with basis $A$. Then, a subset of $F$ is rational if and only if the language of reduced words representing its elements is rational. \et

It is an easy consequence of Benois's Theorem that rational subsets of the group are effectively closed under union, intersection,  and complement, because we can simply perform the operations on the rational language of reduced words representing the elements in the subsets.\\

Fixed points of endomorphisms of free groups are known to  constitute a finitely generated and computable subgroup \cite{[Mut22]}. We start by showing that this implies that we can compute the set of elements that are mapped to themselves multiplied by a fixed element $u$.

\bt
\label{eqphi}
Let $\phi \in {\rm End}(F_n)$ and let $u \in F_n$. Let $X_{\phi,u} = \{ v \in F_n \mid v\phi = vu\}$. Then
$X_{\phi,u} \in {\rm Rat}(F_n)$ and is effectively computable.
\et

\proof
Let $A$ be a basis of $F_n$ and let $B = A \,\dot{\cup}\, \{ \$ \}$. Let $\Phi \in {\rm End}(F_B)$ extend $\phi$ through $\$\Phi = u\inv\$$. We claim that 
\beq
\label{eqphi1}
X_{\phi,u} = ({\rm Fix}(\Phi) \cap F_n\$)\$\inv.
\eeq

Indeed, let $v \in X_{\phi,u}$. Then $v\$ \in F_n\$$ and $(v\$)\Phi = (v\phi)u\inv\$ = v\$$, hence $v = v\$\$\inv \in ({\rm Fix}(\Phi) \cap F_n\$)\$\inv$.

Conversely, let $w \in {\rm Fix}(\Phi) \cap F_n\$$. Then $w = v\$$ for some $v \in F_n$ and we get $v\$ = w\Phi = (v\phi)u\inv\$$. Thus $v =  (v\phi)u\inv$ and so $v\phi = vu$. Therefore $w\$\inv = v \in X_{\phi,u}$ and so (\ref{eqphi1}) holds.

By Mutanguha's result \cite{[Mut22]}, ${\rm Fix}(\Phi)$ is an effectively computable finitely generated subgroup of $F_B$, hence rational. Now the theorem follows from (\ref{eqphi1}) and well-known closure properties of rational subsets of free groups.
\qed

We can now prove the main result of this subsection.

\bt
\label{trivialfix}
It is decidable, for a given $\phi \in {\rm Aut}(F_n)$, whether or nor ${\rm Fix}(\wh{\phi})$ is trivial.
\et

\proof
We may assume that ${\rm Fix}(\phi)$ is trivial. Let $M$ be a bounded reduction constant for $\phi$ (i.e. if the product $uv$ is reduced in $F_n$, then $|(uv)\phi| \geq |u\phi| + |v\phi| -2M$) and $\phi\inv$. Let
$$N = \max \{ |u\phi\inv| : |u| \leq M\}.$$
By Theorem \ref{eqphi}, we may decide whether or not 
$$X =  \bigcup_{|u| \leq 2M+N} X_{\phi,u} \cup X_{\phi\inv,u}$$
is infinite. If $X$ is infinite, it follows easily from compactness of $\wh{F_n}$ that $X$ has an accumulation point which is a nontrivial fixed point of $\wh{\phi}$ or $\wh{\phi\inv}$ (but they have the same fixed points!). Thus we may assume that $X$ is finite and we can therefore compute all its elements. Let $m = \max\{ |x| : x \in X\}$,
$$K = \max \{ |y\phi\inv| : |y| \leq m+M\}$$
and $s = \max\{ m+1,K+1\}$. We claim that
\beq
\label{trivialfix1}
{\rm Fix}(\wh{\phi}) \neq \{ 1\} \iff \exists v \in F_n(m < |v| \leq s
\mbox{ and $v$ is a prefix of either $v\phi$ or $v\phi\inv$}).
\eeq

Indeed, suppose that $v$ is a prefix of $v\phi$ of length $> m$ 
and write $v\phi = vu$. Since $|v| > m$, we must have  $|u| > 2M$. Let $a$ denote the first letter of $u$. It follows from our choice of $M$ that $va$ is a prefix of $(va)\phi$. Iterating the process, we construct an infinite word $\alpha \in \wh{F_n}$ such that $v\alpha^{[k]}$ is a prefix of $(v\alpha^{[k]})\phi$ for every $k$. It follows that  $v\alpha \in {\rm Fix}(\wh{\phi})$ and is nontrivial.

If $v$ is a prefix of $v\phi\inv$ of length $> m$, 
then we obtain a nontrivial fixed point of $\wh{\phi\inv}$, which is also a fixed point of $\wh{\phi}$.

Conversely, assume that $\alpha$ is a nontrivial fixed point of $\wh{\phi}$. By our first assumption, $\alpha$ must be an infinite word. Let $v = \alpha^{[s]}$ 
and write $\alpha = v\beta$. Then $\alpha = (v\phi)(\beta\wh{\phi})$. Since $M$ is a bounded reduction constant for $\phi$, there are at most $M$ letters of $v\phi$ erased in the reduction of $(v\phi)(\beta\wh{\phi})$. Hence there is some word $u$ of length $\leq M$ such that $v\phi = wu$, $\beta\wh{\phi} = u\inv\beta'$ and $\alpha = w\beta'$ are all reduced. 

Now $v$ and $w$ are both prefixes of $\alpha$. We split our argument into three cases:

\smallskip

\noindent
\underline{Case 1}: $\big{|} |v| -|w|\big{|} \leq M+N$. 

\smallskip

\noindent
Then $w = vz$ for some $z \in F_n$ of length $\leq M+N$. Hence $v\phi = wu = vzu$. Since $|zu| \leq |z| + |u| \leq M+N+M$, we get $v \in X$, contradicting $|v| > m$. Therefore this case cannot occur at all.

\smallskip

\noindent
\underline{Case 2}: $|w| > |v| + M+N$.

\smallskip

\noindent
Then $v\phi = wu = vzu$ is a reduced product for some $z \in F_n$ and so $v$ is a prefix of $v\phi$.

\smallskip

\noindent
\underline{Case 3}: $|v| > |w| + M+N$.

\smallskip

\noindent
Then we have a reduced product $v = wz$ for some $z \in F_n$ of length $> M+N$. Since $v\phi = wu$, we get $v = (w\phi\inv)(u\phi\inv)$. Now $|u| \leq M$ implies $|u\phi\inv| \leq N$, hence $\big{|} |v| -|w\phi\inv|\big{|} \leq N$. Since $|v| > |w| + M+N$, it follows that $w$ must be a prefix of $w\phi\inv$. Since $|w| < |v|$, we have $|w| < s$. Thus it suffices to show that $|w| > m$. First, we note that $|v| = s > K$ implies that $|v\phi| > m+M$. Now $v\phi = wu$ and $|u| \leq M$ together imply $|w| \geq |v\phi|-M > m$ and so (\ref{trivialfix1}) holds. 

Since condition (\ref{trivialfix1}) is clearly decidable, the theorem is proved.
\qed

\subsection{Almost length-increasing monomorphisms}

Let $R_n$ denote the set of all reduced words on the alphabet $\wt{A}$.
We now focus on proving the result for almost length-increasing monomorphisms.
An endomorphism $\p$ of $F_n$ is said to be:
\bi
\item
{\em length-increasing} if $|u\p| \geq |u|$ for each $u \in F_n$;
\item
{\em strictly length-increasing} if $|u\p| > |u|$ for each $u \in F_n\setminus \{1\}$;
\item
{\em almost length-increasing} if $|u\p| < |u|$ for only finitely many $u \in F_n$;
\item
{\em almost strictly length-increasing} if $|u\p| \leq |u|$ for only finitely many $u \in F_n$.
\ei

It is clear that strictly length-increasing implies both length-increasing and almost strictly length-increasing, and any of the latter two properties implies almost length-increasing.  It is also easy to see that almost length-increasing endomorphisms are injective. Indeed, if $\phi$ is noninjective, the kernel must be infinite, and for every nontrivial  element $x$ in the kernel, we have that $|x\phi|<|x|$.   For showing that no other implication holds in general, it suffices to show that:
\bi
\item there exists some length-increasing endomorphism of $F_n$ which is not almost strictly length-increasing;
\item there exists some almost strictly length-increasing endomorphism of $F_n$ which is not length-increasing.
\ei
In fact, the identity homomorphism is length-increasing but not almost strictly length-increasing. The following example serves as a counterexample for the second claim:

\be
Let $\p$ be the endomorphism of $F_2$ defined by $a\p =a^2$ and $b\p = a^2b^2a^{-2}$. Then $\p$ is an almost strictly length-increasing monomorphism of $F_2$ which is not length-increasing.
\ee

Indeed, let $\psi$ be the  inner automorphism of $F_2$ defined by $u\psi = a^{-2}ua^2$ and let $\theta = \p\psi$. Since $a\theta = a^2$ and $b\theta = b^2$, then $\theta$ is a monomorphism and so must be $\p$. Moreover, $|u\theta| = 2|u|$ for every $u \in F_n \setminus \{ 1\}$, hence 
$$|u\p| = |u\theta\psi\inv| = |a^2(u\theta)a^{-2}| \geq |u\theta| -4 = 2|u|-4$$
and so 
$$|u| > 4 \Rw |u\p| \geq 2|u| - 4 > |u|.$$
Thus $\p$ is almost strictly length-increasing. Since $(a^{-1}ba)\p = b^2$, $\p$ is not length-increasing.

\medskip

The following remark shows that these concepts are not at all interesting for automorphisms (we get at most letter permutations):

\brm
Let $\p$ be an automorphism of $F_n$. Then the following conditions are equivalent:
\bi
\item[(i)] $\p$ is almost length-increasing;
\item[(ii)] $|a\p| = 1$ for every $a \in A$.
\ei 
\erm

\proof
(i) $\Rw$ (ii). Suppose that $|a\p\inv| \neq 1$ for some $a \in A$. Then $a\p\inv = u$ for some $u \in F_n$ with $|u| > 1$. Write $u = vcv\inv$ in reduced form with $c$ cyclically reduced. Since $|u| > 1$, we have $|vc| > 1$. For every $k \geq 1$, we get 
$$(vc^kv\inv)\p = (vcv\inv)^k\p = u^k\p = a^k.$$
Since $|vc| > 1$, we have $|(vc^kv\inv)\p| = |a^k| = k < |vc^kv\inv|$ for every $k \geq 1$, hence $\p$ is not almost length-increasing.

(ii) $\Rw$ (i). Immediate. 
\qed\\

We devote henceforth our attention to monomorphisms.  We will start by showing that these conditions are decidable, that is, given an endomorphism of a free group, it is decidable whether it is injective, and, in case it is, we can decide if it is length-increasing, strictly length-increasing, almost length-increasing or almost strictly length-increasing.  But first we recall the notion of Stallings automaton. For details, see \cite{[BS21]}. 

Given a finitely generated subgroup $H$ of $F_n$. Take $S$ to be a finite generating set of $H$. The {\em flower automaton} of $S$ is obtained by gluing to a basepoint petals labelled by the reduced forms of the elements of $S$. Then we perform all the possible foldings of pairs of edges of the form
$$\xymatrix{
\bullet & \bullet \ar[l]_a \ar[r]^a& \bullet
}$$
where $a$ is a letter (or its inverse). We obtain a finite inverse automaton which does not depend on either $S$ or the sequence of foldings and is therefore known as the {\em Stallings automaton} of $H$ and denoted by $\st(H)$. We write $\st(H) = (Q,q_0,E)$ when $Q$ is the vertex set, $q_0 \in Q$ the basepoint and $E$ the edge set.

\bl
\label{decmono}
Let $\p$ be an endomorphism of $F_n$ and let  $\st({\rm Im}(\p)) = (Q,q_0,E)$. Then the following conditions are equivalent:
\bi
\item[(i)] $\p$ is a monomorphism;
\item[(ii)] $\frac{|E|}{2} = |Q|+n-1$.
\ei
\el

\proof
Since $F_n$ is hopfian, then $\p$ is a monomorphism if and only if ${\rm Im}(\p)$ has rank $n$.  By a well-known fact about Stallings automata, we can compute a basis of ${\rm Im}(\p)$ by taking a spanning tree $T$ of $\A$ and building a generator for each positive edge of $\A$ not occurring in $T$. The number of positive edges in $T$ is $|Q|-1$, so the rank of ${\rm Im}(\p)$ is $\frac{|E|}{2} - (|Q|-1)$. Therefore ${\rm Im}(\p)$ has rank $n$ if and only if $\frac{|E|}{2} = |Q|+n-1$.
\qed

\bp
\label{decli}
It is decidable whether or not a given monomorphism $\p$ of $F_n$ is:
\bi
\item[(i)]
length-increasing;
\item[(ii)]
strictly length-increasing.
\ei
\ep

\proof
Let $M$ denote a bounded reduction constant for $\p$ (which is computable by \cite{[BFH97]}, see also \cite[Theorem 7.1]{[Sil13]}). For every $u \in R_n$ let $u\lambda$ be the the last letter of $u$ if $u$ is nonempty, and $1$ otherwise. Write also
$$u\alpha = \wedge\{ (uv)\p \mid uv \in R_n\},\quad u\p = (u\alpha)(u\beta),\quad q_u = (u\lambda, u\beta, |u\p|-|u|).$$
By our choice of $M$, we have 
\beq
\label{decli4}
\mbox{$|u\beta| \leq M$ for every $u \in R_n$.}
\eeq

Let $u,v \in R_n$ and $a \in \wt{A}$. We show that:
\beq
\label{decli1}
\mbox{if $q_u = q_v$, then $ua \in R_n$ if and only if $va \in R_n$ and in that case $q_{ua} = q_{va}$.}
\eeq

Indeed, suppose that $q_u = q_v$. Then $u\lambda = v\lambda$ and so $ua \in R_n$ if and only if $va \in R_n$. Assume that this is the case. Then $(ua)\lambda  = a = (va)\lambda$. Write $w = u\beta = v\beta$. For every $z \in R_n$, we have $uaz \in R_n$ if and only if $vaz \in R_n$. Moreover, it follows from the definition of $\alpha$ and $\beta$ that $\oo{(uaz)\p} = (u\alpha)\oo{w(az)\p}$ and $\oo{(vaz)\p} = (v\alpha)\oo{w(az)\p}$ are reduced products, hence 
\beq
\label{decli2}
(ua)\alpha = (u\alpha)(\wedge\{ \oo{w(az)\p} \mid az \in R_n\})\quad \mbox{and} \quad (va)\alpha = (v\alpha)(\wedge\{ \oo{w(az)\p} \mid az \in R_n\}),
\eeq
yielding $(ua)\beta = (va)\beta$. Now
$$|(ua)\p| -|ua| = |(ua)\alpha| + |(ua)\beta| -|u\alpha| + |u\alpha| - |u| -1$$
and
$$|(va)\p| -|va| = |(va)\alpha| + |(va)\beta| -|v\alpha| + |v\alpha| - |v| -1.$$
Since $(ua)\beta = (va)\beta$, $q_u = q_v$ implies 
$$|u\alpha| - |u| = -|u\beta| + |u\p| -|u|= -|v\beta| + |v\p| -|v| = |v\alpha| - |v|$$
and (\ref{decli2}) implies $|(ua)\alpha| -|u\alpha| = |(va)\alpha| -|v\alpha|$, we get $|(ua)\p| -|ua| = |(va)\p| -|va|$. Thus $q_{ua} = q_{va}$ and so (\ref{decli1}) holds.

Let $\A = (Q,q_1,Q,E)$ be the $\wt{A}$-automaton defined by:
\bi
\item
$Q = \{ q_u \mid u \in R_n\}$;
\item
$E = \{ (q_u,a,q_{ua}) \mid a \in \wt{A};\, u,ua \in R_n \}.$
\ei  

We show that $\p$ is length-increasing if and only if
\beq
\label{decli3}
\mbox{there is no $u \in R_n$ of length $\leq 2M(2n+1)^{M+1}$ such that $|u\p| < |u|$.}
\eeq

The direct implication holds trivially, so we assume that $\p$ is not length-increasing. Let $u \in R_n$ have minimum length among all the words having shorter image under $\p$. Consider the path $q_1 \xr{u} q_u$ in $\A$. Suppose that some vertex is repeated in this path. Then we can find some reduced factorization $u = u_1u_2u_3$ such that $q_{u_1} = q_{u_1u_2}$ and $u_2 \neq 1$. By successively applying (\ref{decli1}), we get that $u_1u_3 \in R_n$ and $q_{u_1u_3} = q_{u_1u_2u_3} = q_u$. Hence $|(u_1u_3)\p| -|u_1u_3| = |u\p| - |u| < 0$, contradicting the minimality of $|u|$. Thus no vertex is repeated in the path $q_1 \xr{u} q_u$. 

Suppose now that the third component of some vertex  in this path is $\geq 2M$. Then there is some reduced factorization $u = vw$ such that $|v\p| - |v| \geq 2M$. Then
$$|v| + |w| = |u| > |u\p| = |(vw)\p| \geq |v\p| + |w\p| -2M \geq |v| + |w\p|$$
and so $|w\p| < |w|$. Since $v \neq 1$, this contradicts once again the minimality of $|u|$. Thus the third component of every vertex in the path $q_1 \xr{u} q_u$ is $< 2M$ and by minimality of $|u|$ this component is $< 0$ only at the last vertex. Thus any other vertex is of the form $q_v = (v\lambda, v\beta, |v\p|-|v|)$ with $0 \leq |v\p|-|v| \leq 2M-1$. Now there are $2n+1$ possible choices for $v\lambda$, less than $(2n+1)^M$ possible choices for $v\beta$ (by (\ref{decli4})) and $2M$ possible choices for $|v\p|-|v|$. Thus there are at most $2M(2n+1)^{M+1}$ distinct vertices in $q_1 \xr{u} q_u$. Since no vertex is repeated in the path, then $|u|$ equals the number of vertices appearing in the path before the last one. Therefore $|u| \leq 2M(2n+1)^{M+1}$ and so $\p$ is length-increasing if and only if (\ref{decli3}) holds. Since the latter condition is obviously decidable, we may decide whether or not $\p$ is length-increasing.

We may obviously adapt the argument above to prove that
$\p$ is strictly  length-increasing if and only if
there is no $u \in R_n \setminus \{ 1\}$ of length $\leq 2M(2n+1)^{M+1}$ such that $|u\p| \leq |u|$. Therefore we may decide whether or not $\p$ is strictly length-increasing.
\qed

\bp
\label{combou}
Let $\p$ be a length-increasing monomorphism of $F_n$ and let $k \geq 0$. Then 
$$L_k = \{ u \in R_n \; \big{|}\; |u\p| \leq |u| + k\}$$
is rational and computable.
\ep

\proof
Let $M$ denote a bounded reduction constant for $\p$. Recall the $\wt{A}$-automaton $\A = (Q,q_1,Q,E)$ from the proof of Proposition \ref{decli}. Suppose that there exists a path $p \xr{a} q \xr{a\inv} r$ in $\A$ for some $a \in \wt{A}$.
Then there exist $u,v \in R_n$ such that $p = q_u$, $q = q_{ua} = q_v$, $r = q_{va\inv}$ and $ua,va\inv \in R_n$. Then $v\lambda = (ua)\lambda = a$, contradicting $va\inv \in R_n$. Thus there are no paths of the form $p \xr{a} q \xr{a\inv} r$ in $\A$ and so $L(\A) \subseteq R_n$.

We claim also that $\A$ is deterministic.
Indeed, let $(p,a,q),(p,a,r) \in E$. Then there exist $u,v \in R_n$ such that $p = q_u = q_v$, $q = q_{ua}$ and $r = q_{va}$. It follows from (\ref{decli1}) that $q = q_{ua} = q_{va} = r$, therefore $\A$ is deterministic.

Let 
$$Y_k = \{ q_u \in Q \; \big{|}\; |u\p| \leq |u| + k + 2M\}.$$
We have $q_u = (u\lambda, u\beta, |u\p|-|u|)$, $u\lambda \in \wt{A} \cup \{ 1\}$ and $|u\beta| \leq M$ by (\ref{decli4}). Since $\p$ is length-increasing, it follows that $Y_k$ is finite. 

We define a sequence $(\A_{k,j})_j$ of finite subautomata $\A_{k,j} = (Q_{k,j},q_1,T_{k,j},E_{k,j})$ of $\A$ with $Q_{k,j} \subseteq Y_k$ as follows:
We set $Q_{k,1} = T_{k,1} = \{ q_1\}$ and $E_{k,1} = \emptyset$. 
If $\A_{k,j}$ is defined for some $j \geq 1$, we define
\bi
\item
$Q_{k,j+1} = Q_{k,j} \cup \{ q_{ua} \mid q_u\in Q_{k,j},\, |(ua)\p| \leq |ua| + k+2M,\, a \in \wt{A};\, u,ua \in R_n \}$;
\item
$T_{k,j+1} = T_{k,j} \cup \{ q_{u} \in Q_{k,j+1} \; \big{|}\; |u\p| \leq |u| + k\}$;
\item
$E_{k,j+1} = E_{k,j} \cup \{ (q_u,a,q_{ua}) \mid q_u\in Q_{k,j},\, |(ua)\p| \leq |ua| + k+2M,\, a \in \wt{A};\, u,ua \in R_n \}$.
\ei
Since $\A$ is deterministic and $Y_k$ is finite, it follows easily by induction on $j$ that each $\A_{k,j}$ is finite and computable. Since $\A_{k,j}$ is always a subautomaton of $\A_{k,j+1}$, there exists some $m \geq 1$ such that $\A_{k,m} = \A_{k,m+1}$. Hence $\A_{k,m} = \A_{k,m+j}$ for every $j \geq 1$. Setting $\A_{k} = \A_{k,m}$, we have a finite computable deterministic subautomaton of $\A$. 
To complete our proof, it suffices to show that $L_k = L(\A_k)$.

Suppose first that $u \in L(\A_k)$. Since $\A_k$ is a subautomaton of $\A$ and $L(\A) \subseteq R_n$, then $u \in R_n$. Since $\A$ is deterministic, there is in $\A_k$ a unique path $q_1 \xr{u} q_u \in T_k$. Hence $q_u = q_v$ for some $v \in R_n$ satisfying $|v\p| \leq |v| + k$. Since $q_u = q_v$ implies $|u\p| - |u| = |v\p| - |v| \leq k$, we get $u \in L_k$. Therefore $L(\A_k) \subseteq L_k$.

Suppose now that $u \in L_k$. Let $v$ be a prefix of $u$ and write $u = vw$. Suppose that $|v\p| - |v| > k+2M$. By our choice of $M$, and since $\p$ is length-increasing, we get
$$|u\p| \geq |v\p| + |w\p| - 2M > |v| + k+2M + |w| -2M = |u| + k,$$
contradicting $u \in L_k$. Hence every prefix $v$ of $u$ satisfies $|v\p| - |v| \leq k+2M$. Write $u = a_1\ldots a_r$ in reduced form with $a_1,\ldots, a_r \in \wt{A}$. Now it follows easily that
$$q_{1} \xr{a_1} q_{a_1} \xr{a_2} q_{a_1a_2} \xr{a_3} \ldots \xr{a_r} q_{a_1\ldots a_r} = q_u \in T_k$$
is a path in $\A_k$, thus $u \in L(\A_k)$ and so $L_k \subseteq L(\A_k)$. Therefore $L_k = L(\A_k)$ and we are done.
\qed

\bp
\label{decali}
It is decidable whether or not a given monomorphism $\p$ of $F_n$ is:
\bi
\item[(i)]
almost length-increasing;
\item[(ii)]
almost strictly length-increasing.
\ei
In the affirmative case, we can compute all the words which are exceptions to the corresponding condition. 
\ep

\proof
Let $K = \max\{ |a\p| : a \in \wt{A} \}$.
We show first that
\beq
\label{decali1}
\mbox{if $\p$ is almost length-increasing, then $|u\p| \geq |u|-K$ for every $u \in R_n$.}
\eeq

Indeed, suppose that $|u\p| < |u|-K$ for some $u \in R_n$. Take $a \in \wt{A}$ such that $ua$ is cyclically reduced. Then
$$|(ua)\p| \leq |u\p| + |a\p| < |u|-K+K = |u|.$$
For each $m \geq 1$, $(ua)^m \in R_n$ and 
$$|(ua)^m\p| \leq m|(ua)\p| < m|u| < |(ua)^m|,$$
contradicting $\p$ being almost length-increasing. Thus (\ref{decali1}) holds.

Now let $b,c$ denote two new letters and take $B = A \cup \{b,c\}$ as a basis for $F_{n+2}$. Let $\psi \in F_{n+2}$ be defined by
$$b\psi = b\inv cb, \quad c\psi = c,\quad a\psi = b^K(a\p)b^{-K}\quad (a \in A).$$
Let $\A = (Q,q_0,E)$ be the Stallings automaton of ${\rm Im}(\p)$. By Lemma \ref{decmono}, we have
$\frac{|E|}{2} = |Q|+n-1$. Now let $\A' = (Q',q'_0,E')$ be the Stallings automaton of ${\rm Im}(\psi)$. It is easy to see that $\A'$ is of the form
$$\xymatrix{
\A & q'_0 \ar[l]_{b^K} \ar@(ul,ur)^{c} & \bullet \ar[l]_{b} \ar@(ul,ur)^{c}
}$$
Hence $|Q'| = |Q| + K+1$ and $|E'| = |E| + 2(K+3)$. It follows that 
$$\frac{|E'|}{2} = \frac{|E|}{2} +K+3 =|Q|+n-1 + K+3 = |Q'| + (n+2)-1$$
and so $\psi$ is a monomorphism by Lemma \ref{decmono}. Let $M$ denote a bounded reduction constant for $\psi$. 

By Proposition \ref{decli}, we can decide whether or not $\psi$ is length-increasing. Suppose that $\psi$ is not length-increasing. Then there exists some $u \in R_{n+2}$ such that  $|u\psi| < |u|$. Write $u = v_0w_1v_1 \ldots w_mv_m$ with $m \geq 0$, $v_0,v_m$ reduced words on $\wt{\{b,c\}}$, $v_1,\ldots,v_{m-1}$ nonempty reduced words on $\wt{\{b,c\}}$, $w_1,\ldots,w_m \in R_n \setminus \{ 1\}$. It follows from the definition of $\psi$ that the product
$$\oo{u\psi} = \oo{v_0\psi}\,\oo{w_1\psi}\,\oo{v_1\psi} \ldots \oo{w_m\psi}\,\oo{v_m\psi}$$
is reduced and $|v_i\psi| \geq |v_i|$ for every $i$. Hence $|w_j\psi| < |w_j|$ for some $w_j \in R_n$. Since $w_j\psi = b^K(w_j\p)b^{-K}$, we have $|w_j\p| = |w_j\psi| -2K < |w_j| -2K$ and by (\ref{decali1}) $\p$ is not almost length-increasing.

Thus we may assume that $\psi$ is length-increasing. Let
$$L = \{ w \in R_n \; \big{|} \; |w\p| < |w| \}.$$
 For every $w \in R_n$, we have
$|w\p| - |w| = |w\psi| - |w| - 2K \geq -2K$, hence
$$L = \{ w \in R_n \mid -2K \leq |w\p| - |w| < 0 \}.$$

Consider the computable finite $\wt{B}$-automaton $\A_{2K-1}$
from the proof of Proposition \ref{combou} (built with respect to $\psi$). We have
$$L(\A_{2K-1}) = \{ u \in R_{n+2} \; \big{|}\; 0 \leq |u\psi| - |u| \leq 2K-1\}.$$
Since $|w\p| - |w| = |w\psi| - |w| - 2K$ for every $w \in R_n$, we get
$$L(\A_{2K-1}) \cap R_n = \{ w \in R_{n} \; \big{|}\; 0 \leq |w\psi| - |w| \leq 2K-1\}
= \{ w \in R_n \mid -2K \leq |w\p| - |w| < 0 \} = L,$$
hence $L$ is an effectively computable rational language. Thus 
we can decide whether or not $L$ is finite, that is, 
whether or not $\p$ is almost length-increasing. And we can compute all the finitely many exceptions.

For deciding whether or not $\p$ is almost strictly length-increasing, we consider 
$$L = \{ w \in R_n \; \big{|} \; |w\p| \leq |w| \}$$
and perform the obvious adaptations.
\qed

\bl
\label{tau}
Let $\p$ be a monomorphism of $F_n$ such that ${\rm Fix}(\p) = \{1\}$ and let $v \in F_n$. Then there is at most one $u \in F_n$ such that $u\p = uv$, and in that case it is computable.
\el

\proof
Suppose that $u\p = uv$ and $w\p = wv$. Then
$(uw\inv)\p = uw\inv$ and ${\rm Fix}(\p) = \{1\}$ yields $u = w$. Computability follows from Theorem \ref{eqphi}.
\qed\\ 

We say that $\alpha \in \partial F_n$ is {\em computable} if every finite prefix of $\alpha$ is computable. A finite subset $X$ of $\partial F_n$ is computable if we can enumerate its elements, say $X = \{ \alpha_1,\ldots,\alpha_m\}$, and each $\alpha_i$ is computable.  We can now prove the main result of this subsection, that is, that the infinite fixed points of an almost length-increasing monomorphism (without finite fixed points) are computable, yielding a proof that it is decidable whether an almost length-increasing monomorphism has infinite fixed points or not. 

\bt
\label{alifix}
Let $\p$ be an almost length-increasing monomorphism of $F_n$ with ${\rm Fix}(\p) = \{1\}$. Then ${\rm Fix}(\wh{\p})$ is computable.
\et

\proof
Since ${\rm Fix}(\p) = \{1\}$, there exist no infinite singular fixed points and it follows from \cite[Theorem 8.5]{[Sil13]} that ${\rm Fix}(\wh{\p})$ is finite.

Given $u \in R_n$, write
$$u\sigma = (u \wedge \oo{u\p}),\quad u = (u\sigma)(u\tau), \quad \oo{u\p} = (u\sigma)(u\rho).$$

Let $M$ denote a bounded reduction constant for $\p$. 
Let
$$X_1 = \{ u \in R_n \mid u\tau = 1\},\quad X_2 = \{ u \in R_n \; \big{|}\; |u\rho| \leq M\}, \quad X = X_1 \cup X_2.$$
 Given $\alpha \in \partial F_n$, let $\pref(\alpha)$ denote the set of all (finite) prefixes of $\alpha$.
We claim that
\beq
\label{alifix1}
\fix(\wh{\p}) = \{ 1\} \cup \{ \alpha \in \partial F_n \mid \pref(\alpha) \subseteq X \}.
\eeq

Indeed, let $\alpha = a_1a_2\ldots \in \partial F_n$ with $a_1,a_2,\ldots \in \wt{A}$. For each $k \geq 0$,  
write $\alpha = \alpha^{[k]}\beta_k$. 

Suppose that $\alpha^{[k]}\tau \neq 1$ and $|\alpha^{[k]}\rho| > M$ for some $k$. We have
$$\alpha\wh{\p} = (\alpha^{[k]}\beta_k)\wh{\p} = \oo{(\alpha^{[k]}\p)(\beta_k\wh{\p})}.$$
Since this reduction erases at most $M$ letters from $\alpha^{[k]}\p$ and $|\alpha^{[k]}\rho| > M$, 
we get $(\alpha \wedge \alpha\wh{\p}) = (\alpha^{[k]} \wedge \alpha^{[k]}\p)$, hence $\alpha \notin \fix(\wh{\p})$.

Conversely, suppose that $\alpha \notin \fix(\wh{\p})$. Then $(\alpha \wedge \alpha\wh{\p}) = \alpha^{[r]}$ for some $r \geq 0$. Since $\alpha = \lim_{k\to +\infty} \alpha^{[k]}$, then by continuity we get $\alpha\wh{\p} = \lim_{k\to +\infty} \alpha^{[k]}\p$. Hence there exists some $m > r$ such that $|\alpha^{[m]}\p \wedge \alpha\wh{\p}| > r+M$. Then $(\alpha^{[m]} \wedge \alpha^{[m]}\p) = \alpha^{[r]}$ and so we have both $\alpha^{[m]}\tau \neq 1$ and $|\alpha^{[m]}\rho| > M$. Thus $\pref(\alpha) \not\subseteq X$ and so (\ref{alifix1}) holds.

We prove next that $X_2$ is finite and computable. 
Since $\p$ is an almost length-increasing monomorphism, it follows from Proposition \ref{decali} and Lemma \ref{tau} that we can compute the finite set
$$L = \{ w \in R_n \;\big{|}\; |w\p| < |w| \}.$$
Let $\ell = \max\{ |w| : w \in L\}$ and let $V$ contain all the reduced words of length $\leq 2M$. It follows from Lemma \ref{tau} that, for each $v \in V$, there is at most some $v' \in F_n$ such that $v'\p = v'v$, and in that case it is computable. Hence $V' = \{ v' \mid v \in V\}$ is finite and computable. We claim that $X_2 \subseteq L \cup V'$.

Indeed, let $u \in X_2 \setminus L$. Then $|u\tau| \leq |u\rho| \leq M$. Since $u\p = u(u\tau)\inv(u\rho)$, we get $(u\tau)\inv(u\rho) \in V$ and so $u \in V'$. Thus $X_2 \subseteq L \cup V'$ and it follows that $X_2$ is indeed finite and computable. 

Let 
$$m = \max\{ |x| : x \in X_2\} +1 \;\mbox{ and }\; Y = \{ x \in X_1 \;\big{|} \; |x| = m\}.$$
We show that:
\beq
\label{alifix3}
\mbox{for every $y \in Y$, there exists a unique $y\gamma \in \fix(\wh{\p})$ having $y$ as a prefix.}
\eeq

Given $y \in Y$, we build a sequence $(y_k)_k$ in $X_1$ where each $y_k$ is obtained by adding a letter at the end of $y_{k-1}$. Let $y_0 = y$. For $k \geq 0$, assume that $y_k \in X_1$ is defined. Since $|y_k| \geq m$, we have  $|y_k\rho| > M$. Let $a_k$ be the first letter of $y_k\rho$ and define $y_{k+1} = y_ka_k$. Since $y_{k+1}$ is a prefix of $y_k\p$, we have $y_{k+1} \in R_n$. Since $|y_k\rho| > M$ and at most $M$ letters of $y_k\rho$ are erased in the reduction of $(y_k\rho)(a_k\p)$, we get that $y_{k+1}\tau = 1$ and so $y_{k+1} \in X_1$.  By induction on $k$, the sequence $(y_k)_k$ is now defined. Let 
$$y\gamma = \lim_{k \to +\infty} y_k = ya_0a_1a_2 \ldots.$$
It follows from (\ref{alifix1}) that $y\gamma \in \fix(\wh{\p})$, and by construction it has $y$ as a prefix. It is immediate that we have no other choice for $a_k$ to reach a fixed point, so $y\gamma$ is indeed the unique fixed point having $y$ as a prefix.
 Hence (\ref{alifix3}) holds.

Finally, we show that
\beq
\label{alifix4}
\fix(\wh{\p}) = \{ y\gamma \mid y \in Y\}.
\eeq

The direct inclusion follows from (\ref{alifix3}). Conversely, let $\alpha \in \fix(\wh{\p})$ and let $y = \alpha^{[m]}$. By (\ref{alifix1}), we have $y \in X$ and it follows from the definition of $m$ that $y \in X_1$. Hence $y \in Y$ and by (\ref{alifix3}) we get $\alpha = y\gamma$. Thus (\ref{alifix4}) holds.

Since $Y$ is computable and so is $y_k$ for all $y \in Y$ and $k \geq 0$, the theorem is proved.
\qed

\bc
\label{alifixinf}
Let $\p$ be an almost length-increasing monomorphism of $F_n$. Then we can decide whether or not $\wh{\p}$ admits an infinite fixed point.
\ec

\proof
By \cite{[Mut22]}, we can decide whether or not $\fix(\p) = \{ 1\}$ and compute a nontrivial one if it exists.

Suppose that $\fix(\p) = \{ 1\}$. By the proof of Theorem \ref{alifix}, $\wh{\p}$ admits an infinite fixed point if and only if $Y \neq \emptyset$. Since $Y$ is computable, we can decide this.

Assume now that $\fix(\p) \neq \{ 1\}$. Then we can compute some nontrivial $u \in \fix(\p)$. Write $\oo{u} = vcv\inv$ with $c$ cyclically reduced. Then $(vc^kv\inv)\p = vc^kv\inv$ for every $k \geq 0$ and so
$$(vc^{\omega})\wh{\p} = \lim_{k \to +\infty} (vc^k)\p = \lim_{k \to +\infty} (vc^kv\inv)\p = \lim_{k \to +\infty} vc^kv\inv = \lim_{k \to +\infty} vc^k= vc^{\omega},$$
therefore $\wh{\p}$ admits an infinite fixed point.
\qed\\

 We remark that almost-length increasing monomorphisms include all endomorphisms with remnant, which have density $1$ among endomorphisms of the free group \cite{[Wag99]}, thus being a generic subset of the set of endomorphisms of $F_n$.

\section{Future work}  
Together with Conjecture \ref{infinite stallings}, the following are the main questions arising from this work:

\begin{itemize}
\item The main open problem is the generalization of Theorem \ref{trivialfix} to all injective endomorphisms. A more general question would be: assuming that $\Eq(\phi,\psi)$ is computable,  can we  decide triviality of $\Eq(\wh\phi,\wh\psi)$?
\item Can we answer any of the previous questions assuming that $n=2$? We remark that Logan proved that equalizers of endomorphisms of the free group of rank $2$ are computable.
\end{itemize}

\section*{Acknowledgements}
 Both authors were partially supported by
CMUP, member of LASI, which is financed by national funds through FCT - Funda\c c\~ao
para a Ci\^encia e a Tecnologia, I.P., under the project UID/00144/2025. The first author also acknowledges support by national funds through the Funda\c c\~ao
para a Ci\^encia e a Tecnologia, FCT, under the project UID/04674/2025. 
\bibliographystyle{plain}
\bibliography{Bibliografia}

\begin{thebibliography}{10}

\bibitem{[BS21]}
L.~Bartholdi and P.~V. Silva.
\newblock Rational subsets of groups.
\newblock In J.-E. Pin, editor, {\em Handbook of Automata Theory}, chapter~23.
  EMS Press, Berlin, 2021.

\bibitem{[BMS02]}
G.~Baumslag, A.~G. Myasnikov, and V.~Shpilrain.
\newblock Open problems in combinatorial group theory.
\newblock In {\em Combinatorial and geometric group theory ({N}ew {Y}ork,
  2000/{H}oboken, {NJ}, 2001)}, volume 296 of {\em Contemp. Math.}, pages
  1--38. Amer. Math. Soc., Providence, RI, 2002.

\bibitem{[Ben79]}
M.~Benois.
\newblock Parties rationnelles du groupe libre.
\newblock {\em C. R. Acad. Sci. Paris, S\'er A}, 269:1188--1190, 1969.

\bibitem{[Ber79]}
J.~Berstel.
\newblock {\em Transductions and Context-free Languages}.
\newblock Teubner, Stuttgart, 1979.

\bibitem{[BFH97]}
M.~Bestvina, M.~Feighn, and M.~Handel.
\newblock Laminations, trees, and irreducible automorphisms of free groups.
\newblock {\em Geom. Funct. Anal.}, 7(2):215--244, 1997.

\bibitem{[BH92]}
M.~Bestvina and M.~Handel.
\newblock Train tracks and automorphisms of free groups.
\newblock {\em Ann. Math.}, 135:1--51, 1992.

\bibitem{[BM16]}
O.~Bogopolski and O.~Maslakova.
\newblock An algorithm for finding a basis of the fixed point subgroup of an
  automorphism of a free group.
\newblock {\em Int. J. Algebra Comput.}, 26(1):29--67, 2016.

\bibitem{[CLL24]}
L.~Ciobanu, A.~Levine, and A.~D. Logan.
\newblock Post's correspondence problem for hyperbolic and virtually nilpotent
  groups.
\newblock {\em Bull. London Math. Soc.}, 56(1):159--175, 2014.

\bibitem{[CL21]}
L.~Ciobanu and A.~D. Logan.
\newblock Variations on the {P}ost correspondence problem for free groups.
\newblock In {\em Developments in language theory}, Lecture Notes in Computer
  Science, chapter 12811, pages 90--102. Springer, 2021.

\bibitem{[Coo87]}
D.~Cooper.
\newblock Automorphisms of free groups have finitely generated fixed point
  sets.
\newblock {\em J. Algebra}, 111:453--456, 1987.

\bibitem{[DV96]}
W.~Dicks and E.~Ventura.
\newblock The group fixed by a family of injective endomorphisms of a free
  group.
\newblock {\em Contemp. Math.}, 195, 1996.

\bibitem{[DKLM19]}
V.~Diekert, O.~Kharlampovich, M.~Lohrey, and A.~Myasnikov.
\newblock Algorithmic problems in group theory.
\newblock {\em Dagstuhl seminar report 19131}, 2019.

\bibitem{[DL12]}
J.~Dong and Q.~Liu.
\newblock Undecidability of infinite {P}ost correspondence problem for
  instances of size 8.
\newblock {\em RAIRO Inform. Th\'eor. Appl.}, 46(3):451--457, 2012.

\bibitem{[Dug78]}
J.~Dugundji.
\newblock {\em Topology}.
\newblock Boston, Mass.-London-Sydney: Allyn and Bacon, Inc. Reprinting of the
  1966 original, Allyn and Bacon Series in Advanced Mathematics, 1978.

\bibitem{[Fin15]}
O.~Finkel.
\newblock The exact complexity of the infinite {P}ost correspondence problem.
\newblock {\em Inform. Process. Lett.}, 115(6-8):609--611, 2015.

\bibitem{[Ger87]}
S.~M. Gersten.
\newblock Fixed points of automorphisms of free groups.
\newblock {\em Adv. Math.}, 64:51--85, 1987.

\bibitem{[GH90]}
E.~Ghys and P.~de~la Harpe.
\newblock {\em Sur les Groupes Hyperboliques d'apr\`es {M}ikhail {G}romov}.
\newblock Birkh\"auser, Boston, 1990.

\bibitem{[GT89]}
R.~Z. Goldstein and E.~C. Turner.
\newblock Fixed subgroups of homomorphisms of free groups.
\newblock {\em Bull. London Math. Soc.}, 18(5):468--470, 1989.

\bibitem{[IT89]}
W.~Imrich and E.~Turner.
\newblock Endomorphisms of free groups and their fixed points.
\newblock {\em Math. Proc. Cambridge Philos. Soc.}, 105(3):421--422, 1989.

\bibitem{[Log22]}
A.~D. Logan.
\newblock The equalizer conjecture for the free group of rank two.
\newblock {\em Q. J. Math}, 73(2):777--793, 2022.

\bibitem{[MS21]}
F.~Matucci and P.~V. Silva.
\newblock Extensions of automorphisms of self-similar groups.
\newblock {\em J. Group Theory}, 24:857--897, 2021.

\bibitem{[Mut22]}
J.~P. Mutanguha.
\newblock Constructing stable images.
\newblock {\em preprint, available at
  https://mutanguha.com/pdfs/relimmalgo.pdf}, 2021.

\bibitem{[MNU14]}
A.~Myasnikov, A.~Nikolaev, and A.~Ushakov.
\newblock The {P}ost correspondence problem in groups.
\newblock {\em J. Group Theory}, 17(6):991--1008, 2014.

\bibitem{[Pau89]}
F.~Paulin.
\newblock Points fixes d'automorphismes de groupes hyperboliques.
\newblock {\em Ann. Inst. Fourier}, 39:651--662, 1989.

\bibitem{[Pos46]}
E.~L. Post.
\newblock A variant of a recursively unsolvable problem.
\newblock {\em Bull. Amer. Math. Soc.}, 52:264--268, 1946.

\bibitem{[RSS13]}
E.~Rodaro, P.~V. Silva, and M.~Sykiotis.
\newblock Fixed points of endomorphisms of graph groups.
\newblock {\em J. Group Theory}, 16(4):573--583, 2013.

\bibitem{[Sil13]}
P.~V. Silva.
\newblock Fixed points of endomorphisms of virtually free groups.
\newblock {\em Pacific J. Math.}, 263(1):207--240, 2013.

\bibitem{[Sta10]}
P.~C. Staecker.
\newblock Typical elements in free groups are in different doubly-twisted
  conjugacy classes.
\newblock {\em Topol. Appl.}, 157:1736--1741, 2010.

\bibitem{[Sta83]}
J.~Stallings.
\newblock Topology of finite graphs.
\newblock {\em Invent. Math.}, 71:551--565, 1983.

\bibitem{[Ven02]}
E.~Ventura.
\newblock Fixed subgroups in free groups: a survey.
\newblock In {\em Combinatorial and geometric group theory ({N}ew {Y}ork,
  2000/{H}oboken, {NJ}, 2001)}, volume 296 of {\em Contemp. Math.}, pages
  231--255. Amer. Math. Soc., Providence, RI, 2002.

\bibitem{[Wag99]}
J.~Wagner.
\newblock An algorithm for calculating the {N}ielsen number on surfaces with
  boundary.
\newblock {\em Trans. Amer. Math. Soc.}, 351(1):41--62, 1999.

\end{thebibliography}

\end{document}